\newcommand{\Z}{\mathbb{Z}}
\newcommand{\R}{\mathbb{R}}
\newcommand{\K}{{\mathcal K}}
\renewcommand{\P}{{\mathcal P}}
\newcommand\fl[1]{\left\lfloor {#1} \right\rfloor} 
\newcommand\fr[1]{\left\{ {#1} \right\}} 
\newcommand\saw[1]{\left(\!\left( #1 \right)\!\right)}
\newcommand\ded{{\rm s}}
\newcommand\car{{\rm c}}
\renewcommand\r{{\rm r}}
\newcommand\RC{{\rm R}}
\newcommand\ehr{\operatorname{ehr}}
\def\v{{\boldsymbol v}}
\def\p{\lceil p \rceil}
\def\q{\lceil q \rceil}
\newtheorem{theorem}{Theorem}
\newtheorem{corollary}[theorem]{Corollary}
\newtheorem{lemma}[theorem]{Lemma}
\newtheorem*{definition}{Definition}
\newcommand\comment[1]{}                
\renewcommand\comment[1]{\emph{[#1]}}           
\newcommand{\rem}[1]{} 
\begin{document}
\title{Rademacher--Carlitz Polynomials}

\author{Matthias Beck}
\address{Department of Mathematics\\
         San Francisco State University\\
         San Francisco, CA 94132, USA}
\email{mattbeck@sfsu.edu}

\author{Florian Kohl}
\address{Department of Mathematics  \\
  University of Kentucky \\
  Lexington, KY 40506, USA}
\email{florian.kohl@uky.edu}

\begin{abstract}
We introduce and study the \emph{Rademacher--Carlitz polynomial}
\[
  \RC(u, v, s, t, a, b) := \sum_{ k = \lceil s \rceil }^{ \lceil s \rceil + b - 1 } u^{ \fl{ \frac{ ka + t }{ b } } } v^k 
\]
where $a, b \in \Z_{ >0 }$, $s, t \in \R$, and $u$ and $v$ are variables.
These polynomials generalize and unify various Dedekind-like sums and polynomials; most naturally, one may view $\RC(u, v, s, t, a, b)$ as a polynomial analogue (in the sense of Carlitz) of the \emph{Dedekind--Rademacher sum}
\[
  \r_t(a,b) := \sum_{k=0}^{b-1}\left(\left(\frac{ka+t}{b} \right)\right) \left(\left(\frac{k}{b} \right)\right) ,
\]
which appears in various number-theoretic, combinatorial, geometric, and computational contexts.
Our results come in three flavors: we prove a reciprocity theorem for Rademacher--Carlitz polynomials, we show how they are the only nontrivial ingredients of integer-point transforms
\[
  \sigma(x,y):=\sum_{(j,k) \in \mathcal{P}\cap \Z^2} x^j y^k
\]
of any rational polyhedron $\mathcal{P}$, and we derive a novel reciprocity theorem for Dedekind--Rademacher sums, which follows naturally from our setup.
\end{abstract}

\keywords{Rademacher--Carlitz Polynomials, Dedekind sums, reciprocity theorem, lattice points, generating functions, rational cones.}

\subjclass[2000]{Primary 11F20; secondary 52B20.}

\date{1 October 2013}

\thanks{M.\ Beck's research was partially supported by the NSF (DMS-1162638).}

\maketitle

\renewcommand{\labelitemi}{}

\section{Introduction}\label{introsection}

While studying the transformation properties of
$\eta (z) := e^{ \pi i z / 12 } \prod_{ n \geq 1 } \left( 1 - e^{ 2 \pi i n z } \right) $
under $ \mbox{SL}_{2} ( \Z ) $, Richard Dedekind, in the 1880's \cite{dedekind}, naturally arrived at
what we today call the \emph{Dedekind sum}
\[
  \ded \left( a, b \right) := \sum_{ k=0 }^{ b-1 } \saw{ \frac{ ka }{ b } } \saw{ \frac{ k }{ b } } ,
\]
where $a$ and $b$ are positive integers and
\[
  \saw x :=
  \begin{cases}
    x - \fl x - \frac 1 2 & \text{ if } x \notin \Z , \\
    0 & \text{ if } x \in \Z .
  \end{cases}
\]
The Dedekind sum and its generalizations have since intrigued mathematicians from various areas
such as analytic (see, e.g., \cite{almkvist,bayadsimsek}) and algebraic number theory
(see, e.g., \cite{charollois,ota,solomondedsum}), topology (see, e.g., \cite{hirzebruchzagier,meyersczech}),
algebraic (see, e.g., \cite{brion,gunnelsczech,pommersheim}) and combinatorial geometry (see, e.g., \cite{ccd,mordell}), and algorithmic
complexity (see, e.g.,~\cite{knuth}).

Almost a century after the appearance of Dedekind sums, Leonard Carlitz introduced a
polynomial analogue, the \emph{Dedekind--Carlitz polynomial}
\[
  \car \left( u, v, a, b \right) := \sum_{ k=1 }^{ b-1 } u^{ \fl{ \frac{ ka }{ b } } } v^{ k-1 } .
	\]
Here $u$ and $v$ are indeterminates and $a$ and $b$ are positive integers.
Undoubtedly the most important basic property for any Dedekind-like sum is \emph{reciprocity}. For
the Dedekind--Carlitz polynomials, it says that if $a$ and $b$ are relatively prime then \cite{carlitzpolynomials}
\begin{equation}\label{carlitzreciprocity}
  \left( v-1 \right) \, \car \left( u,v, a,b \right) +  \left( u-1 \right) \, \car \left( v,u,b,a \right) = u^{ a-1 } v^{ b-1 } -1\, .
\end{equation}

Carlitz's reciprocity theorem generalizes that of Dedekind \cite{dedekind}, which states that for
relatively prime positive integers $a$ and $b$,
\begin{equation}\label{eq:dedrec}
  \ded \left( a,b \right) + \ded \left( b,a \right) = - \frac{ 1 }{ 4 }+ \frac{ 1 }{ 12 } \left(
\frac{ a }{ b } + \frac{ 1 }{ ab } + \frac{ b }{ a } \right) .
\end{equation}
Dedekind reciprocity follows from \eqref{carlitzreciprocity} by applying the operators $u \,
\partial{u}$ twice and $v\, \partial v$ once to Carlitz's reciprocity identity.

Dedekind sums have many generalizations. One of the earliest will play a central role in this paper:
for $a, b \in \Z_{ >0 }$, and $t \in \R$, we define the \emph{Dedekind--Rademacher
sum}~\cite{rademacherdedekind}
\begin{equation}\label{eq:raddeddef}
  \r_t(a,b) := \sum_{k=0}^{b-1}\left(\left(\frac{ka+t}{b} \right)\right) \left(\left(\frac{k}{b} \right)\right) .
\end{equation}
Our goal is to introduce and study an analogue of this sum in the world of polynomials: for $a, b \in \Z_{ >0 }$, $s, t \in \R$, and variables $u$ and $v$, we define the
\emph{Rademacher--Carlitz polynomial}
\[
  \RC(u, v, s, t, a, b) := \sum_{ k = \lceil s \rceil }^{ \lceil s \rceil + b - 1 } u^{ \fl{ \frac{
ka + t }{ b } } } v^k . 
\]
Naturally, Dedekind--Carlitz polynomials are special cases of Rademacher--Carlitz polynomials, in the
sense that $v \, \car (u, v, a, b) = \RC(u, v, 0, 0, a, b) - 1$.
It will be handy to abbreviate the linear function $f(x) := \frac{ ax + t }{ b }$ which appears
in the exponent of $u$, and so we will typically use the notation
\[
  \RC(u, v, s, f) := \sum_{ k = \lceil s \rceil }^{ \lceil s \rceil + b - 1 } u^{ \fl{ f(k) } } v^k 
\]
with the understanding that $b$ equals the denominator in the linear function $f$.

Our motivation to study Rademacher--Carlitz polynomials is twofold: first, they seem natural
generalizations of Dedekind--Carlitz polynomials and, as we will see below, they give rise not only to
new reciprocity theorems but also new results on old constructs, such as Dedekind--Rademacher sums. Our
second motivation stems from the fact that Rademacher--Carlitz polynomials appear naturally---as we
will also show below---in the \emph{integer-point transforms}
\[
  \sigma_\P (x,y) := \sum_{ (m,n) \in \P \cap \Z^2 } x^m y^n
\]
of 2-dimensional rational polyhedra $\P$, in particular, 2-dimensional cones/polygons with rational
vertices. In fact, our paper extends some of the methods introduced in \cite{bhm}, which showed that
Dedekind--Carlitz polynomials are natural ingredients for 2-dimensional \emph{lattice} polyhedra, i.e.,
those with integral vertices. Carlitz's reciprocity theorem \eqref{carlitzreciprocity} was a natural by-product of the
geometric approach of \cite{bhm}, and our first result, which mirrors the geometric setup of
\cite{bhm}, is a reciprocity theorem for Rademacher--Carlitz polynomials.

\begin{theorem}\label{reciprocityrcpolyfinal}
Let $f(x) := \frac{ ax + t }{ b }$ be a linear function with relatively prime $a, b \in \Z_{ >0 }$, $t \in \R$, and let $(p,q) \in \R^2$ be a point on the graph of $f$. Then
\[
  v(1-u) \, \RC \left( v,u,p,f \right) + u(1-v) \, \RC \left( u,v,q,f^{-1} \right)
  = u^{ \p } v^{ \q } \left( 1 - u^b v^a \right) - u^c v^d (1-u) (1-v) \, ,
\]
where $(c,d)$ is the unique lattice point on the half-open line segment $\left[(p,q), (p+b, q+a) \right)$; 
if there are no integer points on the graph of $f$ (and so $(c,d)$ does not exist), the last term on the 
right-hand side needs to be omitted.
\end{theorem}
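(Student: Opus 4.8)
The plan is to prove this reciprocity theorem via a geometric interpretation, mirroring the approach of \cite{bhm}. The key observation is that the Rademacher--Carlitz polynomial $\RC(v,u,p,f)$ should be realized as the integer-point transform $\sigma_\P(u,v)$ of a suitable region in the plane. Specifically, since the exponent of $u$ is $\fl{f(k)}$ and $f$ is the line through $(p,q)$ with slope $\frac{a}{b}$, I would associate to $\RC(v,u,p,f)$ the lattice points in a horizontal strip bounded below by the graph of $f$ (or the vertical lines through consecutive integer abscissae), counted by their position below the line. The swapped polynomial $\RC(u,v,q,f^{-1})$ corresponds to the same geometric picture reflected across the diagonal $y=x$, i.e., to the inverse linear function $f^{-1}$. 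The two polynomials together should enumerate lattice points on \emph{both} sides of the line defined by $f$.

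The central step is to set up a two-dimensional cone or strip $\mathcal{C}$ based at the point $(p,q)$ on the graph, generated by the direction $(b,a)$ along the line together with a vertical (or horizontal) direction, and to compute its integer-point transform $\sigma_{\mathcal{C}}(u,v)$ in two different ways. On the one hand, one decomposes $\mathcal{C}$ into horizontal and vertical slabs and reads off the $\RC$ polynomials: the factor $v(1-u)$ arises from summing a geometric series in the $u$-direction within each row and accounting for the half-open boundary, while $u(1-v)$ arises symmetrically for the columns. On the other hand, one counts the same lattice points directly, which produces the right-hand side. The factor $1-u^b v^a$ reflects tiling the fundamental parallelogram spanned by $(b,a)$ (the period of the line relative to the integer lattice, using $\gcd(a,b)=1$), while $u^{\p}v^{\q}$ fixes the base point via the ceilings. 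The correction term $u^c v^d (1-u)(1-v)$ accounts for the unique lattice point $(c,d)$ lying on the graph of $f$ within one period, which is double-counted (or lies exactly on the shared boundary) in the two decompositions; when $f$ passes through no lattice point, this term vanishes.

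Concretely, I would first verify that the half-open fundamental domain $[(p,q),(p+b,q+a))$ along the line contains exactly one lattice point precisely when $f$ meets $\Z^2$, using that $a,b$ are coprime so the line hits the integer lattice with period $(b,a)$. Next I would express $\sigma_{\mathcal{C}}(u,v)$ as a rational generating function by summing the contributions of each lattice row and each lattice column, carefully tracking the half-open versus closed boundary conditions introduced by the floor and ceiling functions. Multiplying through by $(1-u)(1-v)$ to clear denominators should convert the rational identity into the polynomial identity in the statement, with the two $\RC$ terms appearing with their respective prefactors $v(1-u)$ and $u(1-v)$.

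I expect the main obstacle to be bookkeeping the boundary lattice points correctly, especially the exceptional point $(c,d)$ on the graph of $f$. The subtlety is that a lattice point lying exactly on the line is shared between the ``below'' count and the ``above'' count, so it must be neither omitted nor double-counted; getting the half-open conventions in the $\RC$ summation (governed by the ceilings $\lceil s \rceil$) to align precisely with the half-open fundamental segment $[(p,q),(p+b,q+a))$ is where the $-u^c v^d(1-u)(1-v)$ correction term is pinned down. I would handle this by treating the generic case (no lattice point on the graph) first, where the two strips partition the relevant lattice points cleanly, and then analyze the degenerate case separately, showing that the single shared lattice point contributes exactly the extra term. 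Verifying that the non-coprime contributions cancel and that the geometric series telescope correctly should otherwise be routine.
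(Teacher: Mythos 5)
Your proposal follows essentially the same route as the paper's proof: base the geometry at $(p,q)$, split the shifted first quadrant into the two cones spanned by $(b,a)$ together with the coordinate directions, compute each integer-point transform by tiling with the fundamental parallelogram (whose width-1 slices yield exactly the Rademacher--Carlitz polynomials, explaining the prefactors $u$ and $v$), and clear denominators against $\frac{u^{\p}v^{\q}}{(1-u)(1-v)}$. The only cosmetic difference is your ``double-counting'' worry about lattice points on the line: the paper sidesteps this by making the decomposition genuinely disjoint --- the cones are half-open and the ray $\left\{(p,q)+\lambda(b,a) : \lambda \ge 0\right\}$ is a third piece whose transform $u^c v^d/(1-u^b v^a)$ is precisely the source of the correction term.
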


We give a proof in Section \ref{RadCarReci}, where we will also show how \eqref{carlitzreciprocity} follows as a corollary.
One can phrase the conditions in Theorem \ref{reciprocityrcpolyfinal} in purely number-theoretic terms as follows.

\begin{corollary}
Let $a, b \in \Z_{ >0 }$ be relatively prime and $p, q \in \R$. Then
\begin{align*}
  &v(1-u) \, \RC \left( v,u,p,bq-ap,a,b \right) + u(1-v) \, \RC \left( u,v,q,ap-bq,b,a \right) \\
  &\qquad = u^{ \p } v^{ \q } \left( 1 - u^b v^a \right) - u^c v^d (1-u) (1-v) \, ,  
\end{align*}
where $c \in \Z$ is (uniquely) determined by the conditions
\[
  a c \equiv ap-bq \pmod b
  \qquad \text{ and } \qquad
  p \le c < p+b \, ,
\]
and $d := \frac{ ac + bq-ap }{ b }$. If $ap-bq \notin \Z$ then the last term on the right-hand side needs to be omitted.
\end{corollary}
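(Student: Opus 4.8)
The plan is to specialize Theorem \ref{reciprocityrcpolyfinal} to the linear function whose graph passes through $(p,q)$ and then translate each geometric hypothesis into the stated arithmetic conditions; the corollary carries no content beyond such a reformulation. First I would choose $t := bq - ap$, so that $f(x) = \frac{ax + (bq-ap)}{b}$ satisfies $f(p) = \frac{ap + bq - ap}{b} = q$. Thus $(p,q)$ lies on the graph of $f$, the hypothesis of the theorem is met, and by definition $\RC(v,u,p,f) = \RC(v,u,p,bq-ap,a,b)$, which is the first term in the corollary. Next I would invert: solving $y = \frac{ax+t}{b}$ for $x$ gives $f^{-1}(y) = \frac{by - t}{a} = \frac{by + (ap - bq)}{a}$, a linear function with denominator $a$ and additive parameter $ap - bq$, so $\RC(u,v,q,f^{-1}) = \RC(u,v,q,ap-bq,b,a)$. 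With these two identifications the left-hand sides of the theorem and the corollary coincide, and the right-hand sides agree once we pin down the lattice point $(c,d)$.

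The one step that requires genuine care is re-expressing the unique lattice point $(c,d)$ on the half-open segment $[(p,q),(p+b,q+a))$ in number-theoretic terms. I would parametrize the segment as $(p + \lambda b,\, q + \lambda a)$ with $\lambda \in [0,1)$; such a point is a lattice point exactly when its first coordinate $c = p + \lambda b$ is an integer satisfying $p \le c < p+b$ and its second coordinate $d = q + \frac{a(c-p)}{b} = \frac{ac + bq - ap}{b} = f(c)$ is also an integer. The integrality of $d$ is precisely the condition $ac + bq - ap \equiv 0 \pmod b$, i.e. $ac \equiv ap - bq \pmod b$, and then $d = \frac{ac + bq - ap}{b}$ exactly as stated. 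Since $\gcd(a,b) = 1$, the integer $a$ is invertible modulo $b$, so this congruence determines $c$ uniquely in a single residue class mod $b$; because the half-open interval $[p, p+b)$ contains exactly one representative of each residue class, $c$ is uniquely determined, matching the uniqueness asserted in Theorem \ref{reciprocityrcpolyfinal}.

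Finally I would treat the degenerate case. The congruence $ac \equiv ap - bq \pmod b$ admits an integer solution $c$ only when $ap - bq \in \Z$, which is exactly the condition that the graph of $f$ contain a lattice point (equivalently $t = bq - ap \in \Z$). Hence when $ap - bq \notin \Z$ no point $(c,d)$ exists and the term $u^c v^d (1-u)(1-v)$ must be dropped, in agreement with the theorem. Substituting the two $\RC$-identifications and the arithmetic description of $(c,d)$ into Theorem \ref{reciprocityrcpolyfinal} yields the corollary. The argument is thus routine bookkeeping; the only real point to check carefully is the equivalence between $d \in \Z$ and the displayed congruence together with the uniqueness and existence dichotomy, both of which rest on $\gcd(a,b)=1$.
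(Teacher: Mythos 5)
Your proposal is correct and takes essentially the same route the paper intends: the corollary is presented there as a pure number-theoretic reformulation of Theorem \ref{reciprocityrcpolyfinal} (substituting $t = bq-ap$, so that $f(p)=q$ and $f^{-1}(y) = \frac{by + ap - bq}{a}$), and your congruence characterization of the lattice point $(c,d)$, resting on the invertibility of $a$ modulo $b$, is exactly the one the paper itself records in Section \ref{DedRadReci}. Nothing is missing.
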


Returning to our second motivation, we remark that the evaluation $\sigma_\P(1,1)$ of an integer-point
transform yields the number of integer lattice points in $\P$. Ehrhart \cite{ehrhartpolynomial}
famously proved in the 1960s that the counting function
\[
  \ehr_\P(t) := \# \left( t \P \cap \Z^d \right) 
\]
is a polynomial in the positive integer variable $t$ when $\P$ is a lattice polytope, and a
quasipolynomial when $\P$ is a rational polytope (see, e.g., \cite{ccd} for more on Ehrhart
quasipolynomials).
It is a natural question how to compute Ehrhart (quasi-)polynomials and integer-point transforms, both
in a computational complexity sense and in terms of ingredients for possible formulas. We will only
briefly touch on the computational aspect, which is governed by Barvinok's theorem \cite{barvinokalgorithm}. 
The ingredients of degree-2 Ehrhart polynomials are easy; they essentially follow from Pick's theorem
\cite{pick} (of which Ehrhart's theorem can be viewed as a far-reaching generalization). The
classification question for degree-2 Ehrhart \emph{quasi}polynomials, i.e., stemming from rational
polygons was answered much more recently \cite{polygons}; here Dedekind--Rademacher sums play a crucial
role as the only nontrivial ingredients.
The analogous classification question for integer-point transforms of lattice polygons was answered in
\cite{bhm}, and Dedekind--Carlitz polynomials played the role here of the nontrivial ingredients. Our
next result provides formulas for the integer-point transforms of \emph{rational} polygons; it can be viewed as a common
generalization (and combination) of the classification results in \cite{bhm} and \cite{polygons}, and
indeed, from this point of view, it should come as no surprise that Rademacher--Carlitz polynomials make an appearance.

\begin{theorem}\label{ipttriangle}
Let $a, b, c, d, e, f, g, h \in \Z_{ > 0 }$, and let $\Delta$ denote the triangle with vertices
$(\frac{e}{f},\frac{g}{h})$, $(\frac{a}{b},\frac{g}{h})$ and $(\frac{e}{f},\frac{c}{d})$. Moreover, we
define $\alpha:=dh(be-af)$, $\beta:=bf(ch-dg)$, and $l(x):=\frac{\beta}{\alpha}x + \frac{c}{d}-
\frac{e\alpha}{f\beta}$.
Then the integer-point transform of $\Delta$ equals
\[
  \sigma_{\Delta}(x,y)
  = \frac{x^{\lceil \frac{a}{b} \rceil}y^{ \lceil \frac{g}{h} \rceil}}{(1-x)(1-y)} 
  + \frac{ \RC \bigl( x, y, \frac g h, l^{ -1 } \bigr) }{(1-x^{-1})(1-x^{\alpha}y^{\beta})}
  + \frac{ \RC \bigl( y, x, \frac e f, l \bigr) }  {(1-y^{-1})(1-x^{-\alpha}y^{-\beta})} \, .
\]
\end{theorem}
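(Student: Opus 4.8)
The plan is to compute $\sigma_\Delta$ geometrically, extending the cone-decomposition method of \cite{bhm}: I would write $\sigma_\Delta(x,y)$ as the sum of the integer-point transforms of the three vertex (tangent) cones of $\Delta$ and then evaluate each transform in turn. Abbreviate the three vertices as $A=(\tfrac ef,\tfrac gh)$ (the right-angle vertex), $B=(\tfrac ab,\tfrac gh)$, and $C=(\tfrac ef,\tfrac cd)$. The tangent cone at $A$ is the shifted nonnegative quadrant $\{x\ge\tfrac ef,\ y\ge\tfrac gh\}$, while the tangent cones at $B$ and $C$ are simplicial cones, one generator of which points along a leg (the directions $(-1,0)$ at $B$ and $(0,-1)$ at $C$) and the other along the hypotenuse. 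The purpose of introducing $\alpha=dh(be-af)$ and $\beta=bf(ch-dg)$ is precisely that $(\alpha,\beta)$ is an integer vector parallel to the hypotenuse: it is the positive multiple $bf\,dh$ of the edge direction from $B$ to $C$. This is what attaches the monomials $x^{\alpha}y^{\beta}$ and $x^{-\alpha}y^{-\beta}$ to that edge and explains the denominators $(1-x^{-1})(1-x^{\alpha}y^{\beta})$ and $(1-y^{-1})(1-x^{-\alpha}y^{-\beta})$ in the two nontrivial terms.

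The quadrant at $A$ contributes a plain product of geometric series, so the heart of the argument is the cone at $B$ (and, symmetrically, at $C$). I would tile $\mathcal C_B$ by the translates, under the lattice spanned by $(-1,0)$ and $(\alpha,\beta)$, of one half-open fundamental parallelepiped $\Pi_B$, which gives $\sigma_{\mathcal C_B}(x,y)=\sigma_{\Pi_B}(x,y)\big/\bigl((1-x^{-1})(1-x^{\alpha}y^{\beta})\bigr)$. The key step is to recognize $\sigma_{\Pi_B}$ as a Rademacher--Carlitz polynomial: parametrizing the lattice points of $\Pi_B$ by their $y$-coordinate $k$, which runs over the $\beta$ consecutive integers starting at $\lceil\tfrac gh\rceil$, the generator $(-1,0)$ forces a single admissible $x$-coordinate at each height, namely $\lfloor l^{-1}(k)\rfloor$, so that $\sigma_{\Pi_B}(x,y)=\RC\bigl(x,y,\tfrac gh,l^{-1}\bigr)$. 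The same computation with the roles of $x$ and $y$ interchanged, now stepping in $x$, yields $\RC\bigl(y,x,\tfrac ef,l\bigr)$ for the cone at $C$; this is where the hypotenuse line enters as $l$ in one term and as its inverse $l^{-1}$ in the other, and where the reduced denominators $|\alpha|$ and $\beta$ fix the numbers of summands.

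Finally I would add the three rational functions and simplify, the assembly being justified by Brion's theorem for the closed polytope and its closed tangent cones, so that overlaps on the shared edges are reconciled automatically at the level of rational functions. I expect two places to be delicate. First, the exact identification in the previous paragraph: getting the half-open conventions on $\Pi_B$, the starting index $\lceil s\rceil$, the term count, and the sign of the (negative) hypotenuse slope all correct, so that the finite sum is literally the stated $\RC$ and the cone is tiled with no point double-counted. Second, and this is the main obstacle, verifying the cancellation that shifts the apex exponent of the quadrant term from the naive $\lceil\tfrac ef\rceil$ to the stated $\lceil\tfrac ab\rceil$: this shift is produced by the boundary contributions of the two $\RC$ numerators combining with the quadrant, it is invisible in any single series expansion but forced at the level of rational functions, and checking it carefully is the crux of the proof. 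I would also record separately the degenerate configurations in which a vertex is a lattice point or the hypotenuse passes through lattice points, exactly in the spirit of the omitted-term clause of Theorem~\ref{reciprocityrcpolyfinal}.
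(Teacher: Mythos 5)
Your outline is, at its core, exactly the paper's proof: Brion's theorem applied to the three vertex cones, the shifted quadrant at the right-angle vertex $(\frac ef,\frac gh)$, and a tiling of the two simplicial cones by half-open fundamental parallelograms whose integer-point transforms are recognized, by the ``width one'' argument, as $\RC\bigl(x,y,\frac gh,l^{-1}\bigr)$ and $\RC\bigl(y,x,\frac ef,l\bigr)$. Your identification of $(\alpha,\beta)$ as the positive multiple $bfdh$ of the edge vector from $(\frac ab,\frac gh)$ to $(\frac ef,\frac cd)$ is also how the paper sets things up, and that whole part of your proposal is correct as it stands.

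The problem is the step you single out as the crux. There is no cancellation shifting the apex exponent of the quadrant term from $\lceil \frac ef\rceil$ to $\lceil \frac ab\rceil$, and no such step can be supplied: Brion's theorem leaves no freedom, since each of the three summands is forced to equal the integer-point transform of the corresponding closed vertex cone, and the cone at $(\frac ef,\frac gh)$ contributes $x^{\lceil e/f\rceil}y^{\lceil g/h\rceil}/\bigl((1-x)(1-y)\bigr)$, full stop. Replacing $\lceil \frac ef\rceil$ by $\lceil \frac ab\rceil$ changes the three-term sum by $\bigl(x^{\lceil a/b\rceil}-x^{\lceil e/f\rceil}\bigr)\,y^{\lceil g/h\rceil}/\bigl((1-x)(1-y)\bigr)$, a nonzero rational function whenever $\lceil \frac ab\rceil\neq\lceil \frac ef\rceil$ (e.g., for the triangle with vertices $(\frac12,\frac12)$, $(\frac52,\frac12)$, $(\frac12,\frac52)$), so the identity as printed is false in that generality. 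The exponent $\lceil\frac ab\rceil$ in the statement is a misprint for $\lceil\frac ef\rceil$: the paper's own proof computes $\sigma_{V_1}(x,y)=x^{\lceil e/f\rceil}y^{\lceil g/h\rceil}/\bigl((1-x)(1-y)\bigr)$ and then simply adds the three cone transforms, with no further reconciliation. The honest conclusion of your (correct) computation is the theorem with $\lceil\frac ef\rceil$; the verification you planned to spend the most effort on would have failed, and recognizing the misprint rather than trying to prove it is the way out. A smaller remark in the same spirit: no separate treatment of degenerate configurations (lattice points on the hypotenuse or rational vertices that happen to be integral) is needed, because Brion's theorem uses closed vertex cones and the parallelograms are half-open; unlike the situation in Theorem \ref{reciprocityrcpolyfinal}, there is no lone ray whose contribution might have to be omitted.
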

We give a proof in Section \ref{IPTratpolygon}.
Theorem \ref{ipttriangle} suffices to provide formulas for the integer-point transform of \emph{any} rational polygon: we can triangulate a
given rational polygon, hence we only have to treat the case of rational triangles and rational line
segments, whose integer-point transforms are relatively straightforward to compute. Using a simple
geometric argument (which we will see in Section \ref{IPTratpolygon}), we can reduce the case of
rational triangles to rational \emph{right} triangles with edges parallel to $x$- and $y$-axis, which are the contents of Theorem~\ref{ipttriangle}.

Our final result is a pleasant by-product of the geometric treatment of Dedekind-like sums; it
turns out that we obtain the following reciprocity theorem for Dedekind--Rademacher sums, which seems
to be new.

\begin{theorem}\label{thm:newrad}
Let $a$ and $b$ be relatively prime positive integers with $a < b$, and let $t \in \R$ with $0 \le t < b$. Then
\begin{align*}
  &\r_{ -t }(a,b) + \r_{ t } (b,a) \, = \\
  &\qquad \frac{ 1 }{ 12 } \left( \frac a b + \frac{ 1 }{ ab } + \frac b a \right) - \frac{1}{4} + \frac{ 1 }{2 a b} \lfloor t \rfloor \left( \lfloor t \rfloor + 1 \right)- \frac 1 2 \left\lfloor \frac t a \right \rfloor
  - \frac \chi 2 \left( \saw{\frac{a^{-1} t}{b}} + \saw{\frac{b^{-1} t}{a}} \right) ,
\end{align*}
where $\chi$ equals $1$ or $0$ depending on whether or not $t \in \Z$,
$a \, a^{ -1 } \equiv 1 \bmod b$, and $b \, b^{ -1 } \equiv 1 \bmod a$.
\end{theorem}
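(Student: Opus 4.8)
The plan is to extract Theorem~\ref{thm:newrad} from the Rademacher--Carlitz reciprocity (Theorem~\ref{reciprocityrcpolyfinal}) by a differential-operator argument that mirrors exactly the way Dedekind reciprocity \eqref{eq:dedrec} is obtained from Carlitz reciprocity \eqref{carlitzreciprocity}. First I would specialize Theorem~\ref{reciprocityrcpolyfinal} to the linear function $f(x) := \frac{ax-t}{b}$ and the point $(p,q) := (0, -\frac t b)$ on its graph (so $f^{-1}(x) = \frac{bx+t}{a}$). Under the normalization $a<b$, $0 \le t < b$ we have $\p = \q = 0$, so that $\RC(v,u,0,f) = \sum_{k=0}^{b-1} v^{\fl{(ka-t)/b}} u^k$ runs over the same index set as $\r_{-t}(a,b)$, while $\RC(u,v,-\frac t b, f^{-1})$ runs over $k = 0, \dots, a-1$, the index set of $\r_t(b,a)$; simultaneously the right-hand side collapses to $(1 - u^b v^a) - u^c v^d (1-u)(1-v)$, the last term being present exactly when $t \in \Z$, which is the source of the indicator $\chi$.

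Next I would apply the operator $(u\partial_u)^2 (v\partial_v)$ to both sides and let $u,v \to 1$, which is precisely the ``$u\partial_u$ twice, $v\partial_v$ once'' recipe used for \eqref{carlitzreciprocity}. Writing $A_k := \fl{f(k)}$ and $C_k := \fl{f^{-1}(k)}$, the factor $v(1-u)$ makes the first summand contribute, among lower-order terms, the bilinear cross term $-2\sum_k k A_k$, while the factor $u(1-v)$ makes the second summand contribute $-\sum_k (C_k+1)^2$. Expanding $A_k = f(k) - \saw{f(k)} - \frac12$ and $C_k = f^{-1}(k) - \saw{f^{-1}(k)} - \frac12$ (valid away from integer arguments), the cross term $\sum_k k\,\saw{f(k)}$ reassembles into $\r_{-t}(a,b)$, and the mixed term $\sum_k f^{-1}(k)\,\saw{f^{-1}(k)}$ hidden inside the square reassembles into $\r_t(b,a)$; a short computation shows both emerge with the \emph{same} coefficient $2b$ (the slope $\frac b a$ of $f^{-1}$ cancels the $a$ produced when converting $\sum_k k\,\saw{f^{-1}(k)}$ into $\r_t(b,a)$), which is exactly what makes the left-hand side symmetric in the two Dedekind--Rademacher sums. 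Everything else on the left is elementary: the power sums $\sum_k k$, $\sum_k k^2$, $\sum_k f^{-1}(k)^2$ (Faulhaber), the degree-one sawtooth sums $\sum_k \saw{f(k)}$, $\sum_k \saw{f^{-1}(k)}$, and the squared sum $\sum_k \saw{f^{-1}(k)}^2$, all of which close up through the distribution (Raabe) relations since the arguments run through complete residue systems modulo $b$ and $a$.

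On the right-hand side I would Taylor-expand $(1-u^bv^a)$ and $u^cv^d(1-u)(1-v)$ to the order selected by $(u\partial_u)^2(v\partial_v)$. Together with the Faulhaber sums coming from the left, the first piece yields the classical main term $\frac{1}{12}\bigl(\frac a b + \frac 1{ab} + \frac b a\bigr) - \frac14$; the lattice point $(c,d)$—characterized by $ac \equiv t \pmod b$ and $bd \equiv -t \pmod a$—contributes its residues, which are exactly the data encoded by $\saw{a^{-1}t/b}$ and $\saw{b^{-1}t/a}$, and these assemble (with the expected signs) into $-\frac\chi2\bigl(\saw{a^{-1}t/b} + \saw{b^{-1}t/a}\bigr)$. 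The remaining corrections $\frac1{2ab}\fl t(\fl t + 1)$ and $-\frac12\fl{t/a}$ have a clean conceptual source: the operator produces \emph{floor} sums, which depend on the full value of $t$, whereas the Dedekind--Rademacher sums are \emph{sawtooth} sums, depending only on $t \bmod b$ and $t \bmod a$. The discrepancy is elementary and, via the splitting $\fl{\frac{kb+t}{a}} = \fl{t/a} + \fl{\frac{kb+(t\bmod a)}{a}}$ and the $t^2$ that appears in $\sum_k f^{-1}(k)^2$, produces exactly these integer-part terms.

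The conceptual skeleton is thus routine, and the real work—the part I expect to be the main obstacle—is the bookkeeping: one must collect all the $\{t\}$- and $\fl t$-dependent pieces scattered across the Faulhaber sums, the Bernoulli/Raabe evaluations of the sawtooth and squared-sawtooth sums, and the exceptional integer-argument terms (where $\saw{x} = \{x\} - \frac12$ fails and must be replaced by $0$, namely at the lattice arguments $x = c$ and $x = d$), and verify that they assemble into precisely $\frac1{2ab}\fl t(\fl t+1) - \frac12\fl{t/a}$ together with the $\chi$-corrections, leaving no residual $\{t\}$-terms. The delicate point is keeping the exceptional-argument contribution on the left in step with the lattice-point term $u^cv^d(1-u)(1-v)$ on the right, so that the two $\chi$-dependent contributions reinforce rather than double-count; once that balance is verified, dividing through by $2b$ gives the asserted identity.
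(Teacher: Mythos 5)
Your proposal follows essentially the same route as the paper's own proof: apply the operator $(u\,\partial_u)^2(v\,\partial_v)$ to Theorem~\ref{reciprocityrcpolyfinal}, convert the resulting floor sums into Dedekind--Rademacher sums via Raabe-type distribution identities (the paper's Lemma~\ref{lem:tech}), and let the lattice-point term $u^cv^d(1-u)(1-v)$ together with the exceptional integer-argument terms supply the $\chi$-corrections, finally dividing by the common coefficient $2b$. The only real difference is cosmetic: you build in the specialization $(p,q)=(0,-\tfrac{t}{b})$ with $f(x)=\frac{ax-t}{b}$ from the start, whereas the paper carries a general point $(p,q)$ with $bq=ap+t$ through the whole computation and performs that same specialization only in its last step.
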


Note that the conditions on $a$, $b$, and $t$ do not constitute a restriction for practical purposes, as
\[
  r_t(a,b) = r_{ t \bmod b } (a \bmod b, b) \, .
\]
At any rate, our proof of Theorem \ref{thm:newrad}, which we give in Section~\ref{DedRadReci}, contains reformulations without the conditions $a<b$ and $0 \le t < b$.

Dedekind's reciprocity theorem \eqref{eq:dedrec} follows naturally from Theorem \ref{thm:newrad} by
setting $t=0$.
However, the more interesting comparison is with Rademacher's reciprocity theorem, which he stated as follows \cite{rademacherdedekind}:
For $ a,b \in \Z $ and $ x,y \in \R $, let
\begin{equation}\label{eq:radorigdef}
  \ded (a,b;x,y) := \sum_{ k=0 }^{ b-1 } \saw{ \frac{ (k+y) a }{ b } + x } \saw{ \frac{ k+y }{ b } } .
\end{equation}
Then, if $a$ and $b$ are relatively prime and $x$ and $y$ are not both integers,
  \[ \ded (a,b;x,y) + \ded (b,a;y,x) = ((x)) ((y)) + \frac{ 1 }{ 2 } \left( \frac{ a }{ b } B_{ 2 } (y) + \frac{ 1 }{ ab } B_{ 2 } (ay+bx) + \frac{ b }{ a } B_{ 2 } (x) \right) , \] 
where $ B_{ 2 } (x) := \fr{x}^{ 2 } - \fr x + \frac{ 1 }{ 6 } $ is the periodized second Bernoulli polynomial. 
A moment's thought reveals that any sum of the form \eqref{eq:raddeddef} can be expressed in the form \eqref{eq:radorigdef} and vice versa.
Indeed, setting $y=0$ and $x = \frac t b$ gives
\[
  \ded \left( a,b; \frac t b , 0 \right) = \sum_{ k=0 }^{ b-1 } \saw{ \frac{ ka + t }{ b } } \saw{ \frac{ k }{ b } }
  \quad \text{ and } \quad
  \ded \left( b,a; 0, \frac t b \right) = \sum_{ k=0 }^{ a-1 } \saw{ \frac{ kb + t }{ a } } \saw{ \frac{ k + \frac t b }{ b } } .
\]
The latter sum equals $\sum_{ k=0 }^{ a-1 } \saw{ \frac{ kb + t }{ a } } \saw{ \frac{ k }{ b } }$ plus some trivial terms.
So Rademacher's reciprocity theorem expressed in terms of $\r_t(a,b)$ says that
\[
  \r_t(a,b) + \r_t(b,a)
\]
equals a simple expression in terms of $a$, $b$, and $t$.
Theorem \ref{thm:newrad}, on the other hand, says that
\[
  \r_t(a,b) + \r_{-t}(b,a)
\]
equals a simple expression, and so it gives a statement complementary to Rademacher reciprocity.
As far as we can tell, the only overlap of the two reciprocity theorems is the case $t=0$, i.e.,
Dedekind's reciprocity theorem.


\section{The reciprocity theorem for Rademacher--Carlitz polynomials} \label{RadCarReci}

\begin{proof}[Proof of Theorem \ref{reciprocityrcpolyfinal}]
As mentioned in the introduction, we follow the ideas of \cite{bhm} which gave a novel geometric proof of \eqref{carlitzreciprocity}.
Let $f(x) := \frac{ ax + t }{ b }$ with $a, b \in \Z_{ >0 } $, where $\gcd(a,b) = 1$, and $t \in \R$, and let $(p,q) \in \R^2$ be a
point on the graph of $f$.
Consider the half-open cones
\begin{align*}
  \K_1 &:= \left\{ (p,q) + \lambda_1 (1,0) + \lambda_2 (b,a) : \, \lambda_1 > 0, \ \lambda_2 \ge 0
\right\} \\
  \K_2 &:= \left\{ (p,q) + \lambda_1 (0,1) + \lambda_2 (b,a) : \, \lambda_1 > 0, \ \lambda_2 \ge 0
\right\}
\end{align*}
and the ray
\[
  \K_3 := \left\{ (p,q) + \lambda (b,a) : \, \lambda \ge 0 \right\} . 
\]
\begin{figure}[h]
\ifx\JPicScale\undefined\def\JPicScale{0.6}\fi
\unitlength \JPicScale mm
\begin{picture}(144.47,88.68)(0,0)
\put(122.24,75.66){\makebox(0,0)[cc]{}}

\linethickness{0.3mm}
\put(57.89,9.47){\line(0,1){79.21}}
\put(57.89,88.68){\vector(0,1){0.12}}
\linethickness{0.3mm}
\put(15.17,41.41){\line(1,0){128.73}}
\put(143.9,41.41){\vector(1,0){0.12}}
\linethickness{0.3mm}
\put(43.09,9.47){\line(0,1){79.21}}
\linethickness{0.3mm}
\put(14.61,30.62){\line(1,0){129.87}}
\put(31.12,26.48){\makebox(0,0)[cc]{$(P,Q)$}}

\put(120.55,62.35){\makebox(0,0)[cc]{}}

\put(133.65,59.45){\makebox(0,0)[cc]{}}

\put(128.91,63.18){\makebox(0,0)[cc]{$\mathcal{K}_1$}}

\linethickness{0.3mm}
\multiput(43.08,30.62)(0.23,0.12){439}{\line(1,0){0.23}}
\linethickness{0.3mm}
\multiput(63.03,30.63)(0.22,0.12){137}{\line(1,0){0.22}}
\linethickness{0.3mm}
\multiput(43.1,44.31)(0.23,0.12){135}{\line(1,0){0.23}}
\linethickness{0.3mm}
\put(43.1,30.19){\line(0,1){13.7}}
\linethickness{0.3mm}
\multiput(74.45,47.04)(0,2.07){7}{\line(0,1){1.03}}
\linethickness{0.3mm}
\put(43.65,30.62){\line(1,0){19.38}}
\linethickness{0.3mm}
\multiput(74.45,47.04)(1.98,0){10}{\line(1,0){0.99}}
\put(84.67,36.02){\makebox(0,0)[cc]{$\Pi_1$}}

\put(50.79,55){\makebox(0,0)[cc]{$\Pi_2$}}

\put(99.19,24.2){\makebox(0,0)[cc]{}}

\put(81.82,71.47){\makebox(0,0)[cc]{$\mathcal{K}_2$}}


\end{picture}
\caption{The shifted first quadrant split into two pointed cones \rem{\bf [$\Pi_1$ is half open the wrong
way]}}\label{fig:conicdecomp}
\end{figure}
These three objects give a disjoint conic decomposition of the shifted first quadrant, shown in
Figure~\ref{fig:conicdecomp}:
\begin{equation}\label{eq:conicdecomp}
  \left\{ (p,q) + \lambda_1 (1,0) + \lambda_2 (0,1) : \, \lambda_1 , \lambda_2 \ge 0 \right\} = \K_1 \cup \K_2 \cup \K_3 \, ,
\end{equation}
and our goal is to compute the integer-point transforms on both sides. For the shifted
first quadrant, this integer-point transform is
\[
  \frac{ u^{ \p } v^{ \q } }{ (1-u) (1-v) } \, .
\]
By a simple tiling argument (see, for example, \cite[Chapter~3]{ccd}), the integer-point transform $\sigma_{ \K_1 } (u,v)$ of the half-open cone $\K_1$ is
\[
  \sigma_{ \K_1 } (u,v) = \frac{ \sigma_{ \Pi_1 } (u,v) }{ \left( 1 - u \right) \left( 1 - u^b v^a \right) } 
\]
where
\[
  \Pi_1 := \left\{ (p,q) + \lambda_1 (1,0) + \lambda_2 (b,a) : \, 0 < \lambda_1 \le 1, \ 0 \le
\lambda_2 < 1 \right\} ,
\]
the \emph{fundamental parallelogram} of $\K_1$.
Since it has width 1, there is exactly one integer point in $\Pi_1$ for each $y$ running from
$\left\lceil q \right\rceil$ to $\left\lceil q \right\rceil + a - 1$. The $x$-coordinate of this
integer point is $\left\lfloor f^{ -1 } (y) \right\rfloor + 1$. Thus
\[
  \sigma_{\Pi_1}(u,v) 
  = \sum_{k=\q}^{\q+a-1 }{u^{\lfloor f^{-1}(k)\rfloor +1}v^{k}}
  = u \, \RC \left( u,v,q,f^{-1} \right) .
\]
With a similar argument, changing the roles of the axes, we obtain our second integer-point transform:
\[
  \sigma_{ \K_2 } (u,v) = \frac{ \sigma_{ \Pi_2 } (u,v) }{ \left( 1 - v \right) \left( 1 - u^b v^a \right) } 
\]
where
\[
  \sigma_{\Pi_2}(u,v)
  = \sum_{k=\p}^{\p + b - 1} u^k v^{\lfloor f(k) \rfloor + 1}
  = v \, \RC \left( v,u,p,f \right) .
\]
It remains to compute the integer-point transform of the ray $\K_3$. It is clear that any two
lattice points on $\K_3$ differ by a multiple of $(b,a)$ and so
\[
  \sigma_{ \K_3 } (u,v) = \frac{ u^c v^d }{ 1 - u^b u^a }
\]
where $(c,d)$ is the lattice point on $\K_3$ with the smallest coordinates, if there is a lattice
point on $\K_3$ at all---otherwise $\sigma_{ \K_3 } (u,v)$ will simply not appear in our formulas.

Thus \eqref{eq:conicdecomp} translates into the identity of rational generating functions
\[
  \frac{ u^{ \p } v^{ \q } }{ (1-u) (1-v) }
  = \frac{ u \, \RC \left( u,v,q,f^{-1} \right) }{ \left( 1 - u \right) \left( 1 - u^b v^a \right) } 
  + \frac{ v \, \RC \left( v,u,p,f \right) }{ \left( 1 - v \right) \left( 1 - u^b v^a \right) } 
  + \frac{ u^c v^d }{ 1 - u^b v^a } \, ,
\]
where the last term only appears if $\K_3$ contains lattice points.
Clearing denominators gives Theorem~\ref{reciprocityrcpolyfinal}.
\end{proof}

Carlitz's reciprocity theorem \eqref{carlitzreciprocity} follows as an immediate corollary by choosing $t=p=q=0$:
note that then $c=d=0$, and so Theorem~\ref{reciprocityrcpolyfinal} gives in this special case
\[
  v(1-u) \, \RC \left( v,u,0,f \right) + u(1-v) \, \RC \left( u,v,0,f^{-1} \right)
  = 1 - u^b v^a - (1-u) (1-v) \, .
\]
We rewrite the expression on the left to see Dedekind--Carlitz polynomials appear:
\begin{align*}
  &v(1-u) \left( \RC \left( v,u,0,f \right) - 1 \right) + u(1-v) \left( \RC \left( u,v,0,f^{-1} \right) - 1
\right) \\
  &\qquad = 1 - u^b v^a - (1-u) (1-v) - v(1-u) - u(1-v) \\
  &\qquad = - u^b v^a + uv \, . 
\end{align*}
Dividing by $-uv$ gives~\eqref{carlitzreciprocity}.

We finish this section with a remark about computational complexity. In the introduction we hinted at Barvinok's theorem \cite{barvinokalgorithm}, which says that in fixed dimensions,
the integer-point transform $\sigma_{\mathcal{P}}(x_1, \dots, x_d)$ of a rational polyhedron $\mathcal{P}$ can be computed as a sum of short rational functions in $x_1$, $x_2$,
$\dots$, $x_d$ in time polynomial in the input size of $\mathcal{P}$. Thus (say) $\sigma_{\Pi_2}(u,v)$ can be computed efficiently, which means we can compute Rademacher--Carlitz sums
efficiently. (This is a nontrivial statement, since Rademacher--Carlitz sums have exponentially many terms when measured in the input size of its parameters.) 


\section{Integer-point transforms of rational polygons}\label{IPTratpolygon}

In this section, we give the details behind our claim that Theorem \ref{ipttriangle} suffices to
characterize the integer-point transform of any rational polygon, and we will prove Theorem~\ref{ipttriangle}.

As mentioned in the introduction, any rational polygon can be triangulated, and so we can compute its
integer-point transform in an inclusion-exclusion fashion from integer-point transforms of rational
line segments and rational triangles. Furthermore, we can embed an arbitrary triangle in a rectangle in
such a way that we can express the triangle as a set union/subtraction of rectangles and right
triangles with edges parallel to $x$- and $y$-axis, as suggested by Figure \ref{fig:triangles2}; if the triangle was rational to begin with, so will be the rectangles and right triangles.
\begin{figure}[h]
	\ifx\JPicScale\undefined\def\JPicScale{0.50}\fi
\unitlength \JPicScale mm
\begin{picture}(147.67,77.11)(0,0)
\linethickness{0.3mm}
\put(13.16,77.11){\line(1,0){51.43}}
\put(13.16,12.5){\line(0,1){64.6}}
\put(64.58,12.5){\line(0,1){64.6}}
\put(13.16,12.5){\line(1,0){51.43}}
\linethickness{0.3mm}
\put(13.16,77.1){\line(1,0){51.43}}
\put(13.16,12.49){\line(0,1){64.6}}
\put(64.59,12.49){\line(0,1){64.6}}
\put(13.16,12.49){\line(1,0){51.43}}
\linethickness{0.3mm}
\put(96.22,76.72){\line(1,0){51.43}}
\put(96.22,12.11){\line(0,1){64.6}}
\put(147.64,12.11){\line(0,1){64.6}}
\put(96.22,12.11){\line(1,0){51.43}}
\linethickness{0.3mm}
\put(96.22,76.71){\line(1,0){51.43}}
\put(96.22,12.11){\line(0,1){64.6}}
\put(147.65,12.1){\line(0,1){64.6}}
\put(96.22,12.11){\line(1,0){51.43}}
\linethickness{0.3mm}
\multiput(13.16,12.49)(0.12,0.15){429}{\line(0,1){0.15}}
\linethickness{0.3mm}
\multiput(56.36,37.35)(0.12,0.58){69}{\line(0,1){0.58}}
\linethickness{0.3mm}
\multiput(13.16,12.49)(0.21,0.12){207}{\line(1,0){0.21}}
\linethickness{0.3mm}
\put(56.36,12.49){\line(0,1){24.85}}
\linethickness{0.3mm}
\put(56.36,37.34){\line(1,0){8.23}}
\linethickness{0.3mm}
\multiput(96.22,12.11)(0.12,0.22){293}{\line(0,1){0.22}}
\linethickness{0.3mm}
\multiput(131.44,76.34)(0.12,-0.23){135}{\line(0,-1){0.23}}
\linethickness{0.3mm}
\multiput(96.22,12.11)(0.19,0.12){274}{\line(1,0){0.19}}
\end{picture}
	\label{fig:triangles2}
	\caption{Triangles embedded in a rectangle and right triangles.}
\end{figure}
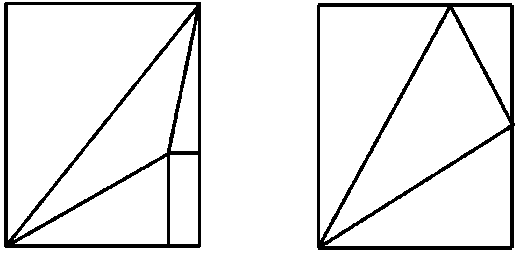
The integer-point transforms of rectangles are easy, and thus it remains to compute integer-point
transforms of right triangles with edges parallel to $x$- and $y$-axis, which (by a harmless lattice
transformation) we may assume to be in the first quadrant with its right angle in the southwestern
vertex. That is, it remains to prove Theorem~\ref{ipttriangle}. 
\begin{proof}[Proof of Theorem~\ref{ipttriangle}]
As stated in the conditions, we assume that $\Delta$ looks like in Figure \ref{fig:triangles2}.
\begin{figure}[h]
\ifx\JPicScale\undefined\def\JPicScale{0.5}\fi
\unitlength \JPicScale mm
\begin{picture}(130.53,83.95)(0,0)
\put(70,81.84){\makebox(0,0)[cc]{$\left(\frac{e}{f},\frac{c}{d}\right)$}}

\linethickness{0.3mm}
\put(32.75,3.95){\line(0,1){80}}
\put(32.75,83.95){\vector(0,1){0.12}}
\linethickness{0.3mm}
\put(20.53,18.98){\line(1,0){110}}
\put(130.53,18.98){\vector(1,0){0.12}}
\linethickness{0.3mm}
\put(56.55,34.07){\line(0,1){44.08}}
\linethickness{0.3mm}
\put(56.55,34.07){\line(1,0){48.89}}
\linethickness{0.3mm}
\multiput(56.55,78.15)(0.13,-0.12){367}{\line(1,0){0.13}}
\put(114.5,41.71){\makebox(0,0)[cc]{$\left(\frac{a}{b},\frac{g}{h}\right)$}}

\put(53.01,27.88){\makebox(0,0)[cc]{$\left(\frac{e}{f},\frac{g}{h}\right)$}}

\end{picture}

\caption{The rational right triangle from Theorem \ref{ipttriangle}. \rem{\bf [Again, a bit much
white space... plus the labels have too small font]}}
\end{figure}
To compute the integer-point transform of $\Delta$, we use Brion's theorem \cite{brion}, which says that
$\sigma_\Delta(x,y)$ equals the sum of the integer-point transforms of the three vertex cones of
$\Delta$.
(The \emph{vertex cone} of a polytope $\P$ at a vertex $\v$ is the smallest cone with apex $\v$ that contains $\P$.)
Thus we need to compute the integer-point transforms of the vertex cones
\begin{align*}
  V_{1}& := \left\{ (\tfrac{e}{f},\tfrac{g}{h}) + \lambda_{1} (1,0)+ \lambda_{2}(0,1) : \, \lambda_{1},  \lambda_{2} \ge 0 \right\} \\
  V_{2} &:= \left\{ (\tfrac{a}{b},\tfrac{g}{h}) + \lambda_{1} (-1,0)+ \lambda_{2} \bigl( dh(be-af),
bf(ch-dg) \bigr) : \, \lambda_{1}, \lambda_{2} \ge 0 \right\} \\
  V_{3} &:= \left\{ (\tfrac{e}{f},\tfrac{c}{d}) + \lambda_{1} (0,-1) + \lambda_{2} \bigl( -dh(be-af),
-bf(ch-dg) \bigr) : \, \lambda_{1}, \lambda_{2} \ge 0 \right\} .
\end{align*}
To shorten notation, we define, as in the statement of Theorem~\ref{ipttriangle}, $\alpha:=dh(be-af)$
and $\beta:=bf(ch-dg)$. The integer-point transform of $V_1$ is straightforward:
\begin{equation}\label{eq:sigv1}
\sigma_{V_{1}}(x,y)=\sum_{k\geq \lceil \frac{e}{f} \rceil\mbox{, } j\geq \lceil \frac{g}{h} \rceil}
{x^k y^j}=\frac{x^{\lceil \frac{e}{f} \rceil}y^{ \lceil \frac{g}{h} \rceil}}{(1-x)(1-y)} \, .
\end{equation}
For the other two vertex cones, we use a tiling argument similar to the one in the proof of Theorem \ref{reciprocityrcpolyfinal}.
This gives,
\begin{align}
  \sigma_{V_{2}}(x,y) &= \frac{ \sigma_{\Pi_{2}} (x,y) }{(1-x^{-1})(1-x^{\alpha}y^{\beta})} \label{eq:sigmav2} \\
  \sigma_{V_{3}}(x,y) &= \frac{ \sigma_{\Pi_{3}} (x,y) }{(1-y^{-1})(1-x^{-\alpha}y^{-\beta})} \label{eq:sigmav3}
\end{align}
where
\begin{align*}
  \Pi_{2} &:= \left\{ (\tfrac{a}{b},\tfrac{g}{h}) + \lambda_{1} (-1,0)+ \lambda_{2}(\alpha,\beta) : \, 0\leq \lambda_{1}, \lambda_{2} <1 \right\} \\
  \Pi_{3} &:= \left\{ (\tfrac{e}{f},\tfrac{c}{d}) + \lambda_{1} (0,-1) + \lambda_{2}(-\alpha,-\beta) : \, 0\leq \lambda_{1}, \lambda_{2}<1\right\}
\end{align*}
are the fundamental parallelograms of $V_2$ and $V_3$, respectively.
To compute the integer-point transform of $\Pi_2$, we note that the linear function $l(x):=\frac{\beta}{\alpha}x + \frac{c}{d} - \frac{e\alpha}{f\beta}$ given in the statement of
Theorem \ref{reciprocityrcpolyfinal} describes the line that contains the hypotenuse of $\Delta$.
Since $\Pi_2$ has height $1$ and is half open, for every integral $y$-coordinate between $\lceil \frac{g}{h} \rceil$ and $\lceil\frac{g}{h}\rceil+\beta-1$ there is exactly one
point with integral $x$-coordinate, namely $\lfloor l^{-1}(y)\rfloor$, and so
\begin{equation}\label{eq:sigmap2}
  \sigma_{\Pi_{2}}(x,y)
  = \sum_{k=\left\lceil \frac{g}{h} \right\rceil}^{\left\lceil\frac{g}{h}\right\rceil+\beta-1} x^{\left\lfloor l^{-1}(k)\right\rfloor}y^k
  = \RC \left( x, y, \frac g h, l^{ -1 } \right) .
\end{equation}
A parallel argumentation yields
\begin{equation}\label{eq:sigmap3}
  \sigma_{ \Pi_3 } (x,y) 
  = \sum_{k=\left\lceil \frac{e}{f}\right\rceil}^{\left\lceil \frac{e}{f}\right\rceil + \alpha - 1}{x^k y^{\lfloor l(k)\rfloor}}
  = \RC \left( y, x, \frac e f, l \right) .
\end{equation}
Brion's theorem says
\[
  \sigma_\Delta (x,y) = \sigma_{ V_1 } (x,y) + \sigma_{ V_2 } (x,y) + \sigma_{ V_3 } (x,y) \, , 
\]
which, using \eqref{eq:sigv1}--\eqref{eq:sigmap3}, yields Theorem~\ref{reciprocityrcpolyfinal}.
\end{proof}


\section{A novel reciprocity theorem for Dedekind--Rademacher sums } \label{DedRadReci}

Our goal in this section is to prove Theorem \ref{thm:newrad}. 
We will need a few identities that are slightly technical but straightforward.
For $x \in \R$ and $m \in \Z_{ >0 }$, we denote by $[x]_m$ the smallest nonnegative real number congruent to $x$ mod~$m$.

\begin{lemma}\label{lem:tech}
Let $a$ and $b$ be positive relatively prime integers, and let $t \in \R$.
\begin{enumerate}[{\rm (a)}]
\item\label{lem:simplesum} 
$ \displaystyle
  \sum_{ k=0 }^{ b-1 } \fr{ \frac{ ak+t }{ b } } = \frac{ b-1 }{ 2 } + \fr t .
$
\item\label{lem:fractorad}
$ \displaystyle
  \sum_{ k=0 }^{ b-1 } k \fr{ \frac{ ak+t }{ b } }
  = b \, \r_t(a,b) + \tfrac{ 1 }{ 4 } b(b-1) + \tfrac 1 2 b \fr t - \tfrac 1 2 [t]_b + \tfrac 1 2
\chi \, b \saw{ \frac{ t a^{ -1 } }{ b } }
$ \\
where $\chi$ equals $1$ or $0$ depending on whether or not $t$ is an integer.
\end{enumerate}
\end{lemma}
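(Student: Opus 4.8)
The plan is to establish part (a) by a reindexing argument and then derive part (b) from it, the bridge between the two being the elementary identity $\fr x = \saw x + \tfrac12 - \tfrac12\,\chi(x)$, valid for every real $x$ (where $\chi(x):=1$ if $x\in\Z$ and $0$ otherwise), which follows straight from the definitions of $\fr{\cdot}$ and $\saw{\cdot}$. For part (a) I would write $t = \fl t + \fr t$ and observe that, since $0\le\fr t<1$, the integer $ak+\fl t$ influences $\fr{\frac{ak+t}{b}}$ only through its residue, giving $\fr{\frac{ak+t}{b}} = \frac{[\,ak+\fl t\,]_b+\fr t}{b}$. As $\gcd(a,b)=1$, the map $k\mapsto[\,ak+\fl t\,]_b$ permutes $\{0,1,\dots,b-1\}$, so the sum collapses to $\sum_{s=0}^{b-1}\frac{s+\fr t}{b}=\frac{b-1}{2}+\fr t$.

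For part (b) I would apply the bridge identity at $x=\frac{ak+t}{b}$ and, for the weight, use $k = b\,\saw{\frac kb}+\frac b2$, which holds for $1\le k\le b-1$ (the $k=0$ summand of $k\,\fr{\frac{ak+t}{b}}$ vanishing, so no correction is needed there). Expanding $\sum_{k=0}^{b-1}k\,\fr{\frac{ak+t}{b}}$ then splits into four kinds of terms: the products $\saw{\frac kb}\saw{\frac{ak+t}{b}}$ assemble into $b\,\r_t(a,b)$ by the definition of the Dedekind--Rademacher sum; the plain weight sum $\frac12\sum_k k$ gives $\tfrac14 b(b-1)$; a sawtooth sum $\frac b2\sum_k\saw{\frac{ak+t}{b}}$, which I would evaluate via part (a) together with the bridge identity after removing its $k=0$ contribution $\saw{\frac tb}$ (this is the source of a leftover $-\tfrac b2\saw{\frac tb}$); and the two indicator sums $\sum_k\chi\bigl(\tfrac{ak+t}{b}\bigr)$ and $\sum_k k\,\chi\bigl(\tfrac{ak+t}{b}\bigr)$.

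The indicator sums are the main obstacle, since they are what carry the asymmetric terms $-\tfrac12[\,t\,]_b$ and $\tfrac12\chi\,b\,\saw{\frac{a^{-1}t}{b}}$. The congruence $ak+t\equiv0\pmod b$ is solvable only when $t\in\Z$, in which case $\gcd(a,b)=1$ produces a unique solution $k_0\in\{0,\dots,b-1\}$ with $k_0\equiv -a^{-1}t\pmod b$; thus $\sum_k\chi\bigl(\tfrac{ak+t}{b}\bigr)=\chi$ and $\sum_k k\,\chi\bigl(\tfrac{ak+t}{b}\bigr)=\chi\,k_0$. Turning $-\tfrac12 k_0$ into the advertised form is the delicate step: with $r_0:=[\,a^{-1}t\,]_b$ one has $k_0=b-r_0$ if $r_0\neq0$ and $k_0=0$ if $r_0=0$, so $-\tfrac12 k_0$ combines with the leftover $-\tfrac b2\saw{\frac tb}$ to produce precisely $-\tfrac12[\,t\,]_b+\tfrac\chi2 b\,\saw{\frac{a^{-1}t}{b}}$. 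I would close by verifying the identity in the cases $t\notin\Z$ and $t\in\Z$ (the latter split according to whether $b\mid t$), substituting the explicit values $\saw{\frac tb}=\frac{[\,t\,]_b}{b}-\frac12$ and $\saw{\frac{a^{-1}t}{b}}=\frac{r_0}{b}-\frac12$ (or $0$ when the argument is integral); in each case the terms match exactly.
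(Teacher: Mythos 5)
Your proposal is correct and follows essentially the same route as the paper: for part (b) both arguments convert fractional parts to sawtooth functions (your bridge identity $\fr x = \saw x + \tfrac12 - \tfrac12\chi(x)$ and weight identity $k = b\saw{\tfrac kb} + \tfrac b2$ are exactly the paper's substitution $\tfrac kb = \fr{\tfrac kb}$ followed by the same conversion), recognize $b\,\r_t(a,b)$, invoke part (a), and track the unique index $k_0$ with integral argument to produce the correction term $\tfrac\chi2 b\saw{\tfrac{a^{-1}t}{b}}$. The only cosmetic differences are that you prove part (a) directly via the permutation $k \mapsto [ak+\fl t]_b$ where the paper simply cites Raabe's formula, and you verify the final bookkeeping by a three-case analysis where the paper absorbs it into a one-line observation; your computations check out in all cases.
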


\begin{proof}
(\ref{lem:simplesum}) is essentially \emph{Raabe's formula} (see, e.g., \cite[Lemma 1]{grosswald}).

\vspace{6pt}
\noindent
(\ref{lem:fractorad}) We compute
\begin{align*}
  &\frac 1 b \sum_{ k=0 }^{ b-1 } k \fr{ \frac{ ak+t }{ b } }
  = \sum_{ k=1 }^{ b-1 } \fr{ \frac k b } \fr{ \frac{ ak+t }{ b } } \\
  &\qquad = \sum_{ k=1 }^{ b-1 } \saw{ \frac k b } \saw{ \frac{ ak+t }{ b } }
    + \frac 1 2 \sum_{ k=1 }^{ b-1 } \fr{ \frac{ ak+t }{ b } }
    + \frac 1 2 \sum_{ k=1 }^{ b-1 } \fr{ \frac k b }
    - \frac {b-1} 4
    + \frac \chi 2 \saw{ \frac{ t a^{ -1 } }{ b } } \, .
\end{align*}
The last correction term comes from a case-by-case analysis of $\saw{ \frac{ ak+t }{ b } }$: the
argument is an integer if and only if $t$ is an integer congruent to $-ak$ for some integer $k$
between 1 and $b-1$. 
With part (a) and the definition of the Dedekind--Rademacher sum, this becomes
\[
  \frac 1 b \sum_{ k=0 }^{ b-1 } k \fr{ \frac{ ak+t }{ b } }
  = \r_t(a,b)
    + \frac{ b-1 }{ 4 } + \frac 1 2 \fr t - \frac 1 2 \fr{ \frac t b } 
    + \frac \chi 2 \saw{ \frac{ t a^{ -1 } }{ b } } \, .
\]
With $b \fr{ \frac x b } = [x]_b$, this gives~(b).
\end{proof}

\begin{proof}[Proof of Theorem \ref{thm:newrad}]
We start by applying the operators $u\, \partial u$ twice and $v\, \partial v$ once to the identity in Theorem \ref{reciprocityrcpolyfinal}, which yields
\begin{align}
  &2 \sum_{ k = \p }^{ \p + b - 1 } k \fl{ \frac{ ak+t }{ b } }
  +2 \sum_{ k = \p }^{ \p + b - 1 } \!\!k
  +  \sum_{ k = \p }^{ \p + b - 1 } \fl{ \frac{ ak+t }{ b } }
  +  b \nonumber \\
  &\qquad + \sum_{ k = \q }^{ \q + a - 1 } \fl{ \frac{ bk-t }{ a } }^2 
  +2 \sum_{ k = \q }^{ \q + a - 1 } \fl{ \frac{ bk-t }{ a } }
  +  a \label{eq:foursums} \\
 &= ( \p + 2b ) a \p + (2 \p + b) b \q + ab^2 + \chi (2c + 1) \, . \nonumber
\end{align}
Here $\chi$ equals 1 or 0 depending on whether or not there are integer points on the graph of $f(x)
= \frac{ ax+t }{ b }$; since $a$ and $b$ are relatively prime, there will be integer points if and
only if $t \in \Z$, and thus $\chi$ has the same meaning as in Lemma~\ref{lem:tech}.
Recall also from the statement of Theorem \ref{reciprocityrcpolyfinal} that $c$ is the $x$-coordinate of the unique lattice point on the half-open line segment
$\left[(p,q), (p+b, q+a) \right)$. Thus $c \in \Z$ is uniquely determined by the conditions
\[
  c \equiv a^{ -1 } (ap-bq) \pmod b
  \qquad \text{ and } \qquad
  p \le c < p+b \, ,
\]
where $a \, a^{ -1 } \equiv 1 \bmod b$.

There are four nontrivial sums in \eqref{eq:foursums}, which we will uncover now one by one, with the
help of Lemma~\ref{lem:tech}.

\begin{align*}
  &\sum_{ k = \p }^{ \p + b - 1 } k \fl{ \frac{ ak+t }{ b } }
  = \sum_{ k = \p }^{ \p + b - 1 } k \, \frac{ ak+t }{ b }
     - \sum_{ k = \p }^{ \p + b - 1 } k \fr{ \frac{ ak+t }{ b } } \\
  &= \frac 1 3 a b^2 + a b \p + a \p^2 - \frac 1 2 a b - a \p + \frac 1 2 b t + \p t + \frac 1 6 a - \frac 1 2 t 
     - \sum_{ k = 0 }^{ b - 1 } (k + \p) \fr{ \frac{ a (k + \p) + t }{ b } } \\
  &= \frac 1 3 a b^2 + a b \p + a \p^2 - \frac 1 2 a b - a \p + \frac 1 2 b \fl t + \p \fl t + \frac 1 6 a - \frac 1 2 t +\frac 1 2 [a \p+t]_b \\
  &\qquad {} -\frac{1}{4a}b^2 - \frac 1 2 b \p + \frac 1 4 b + \frac 1 2 \p - b \, \r_{ a\p+t } (a,b)
- \frac 1 2 \chi \, b \saw{ \frac{ \p+ta^{ -1 } }{ b } } ,
\end{align*}
where again $a \, a^{ -1 } \equiv 1 \bmod b$.
(Note that $a\p+t \in \Z$ if and only if $t \in \Z$.)

\begin{align*}
  \sum_{ k = \p }^{ \p + b - 1 } \fl{ \frac{ ak+t }{ b } }
  &= \sum_{ k = \p }^{ \p + b - 1 } \frac{ ak+t }{ b }
   - \sum_{ k = \p }^{ \p + b - 1 } \fr{ \frac{ ak+t }{ b } } \\
  &= \frac 1 2 a (b-1) + a \p + t
   - \sum_{ k = 0 }^{ b - 1 } \fr{ \frac{ k+t }{ b } } \\
  &= \frac 1 2 (a-1) (b-1) + a \p + \fl t .
\end{align*}
Analogously,
\[
  \sum_{ k = \q }^{ \q + a - 1 } \fl{ \frac{ bk-t }{ a } }
  = \frac 1 2 (a-1) (b-1) + b \q + \fl{ -t } .
\]
Finally,
\begin{align*}
  &\sum_{ k = \q }^{ \q + a - 1 } \fl{ \frac{ bk-t }{ a } }^2
  = \sum_{ k = \q }^{ \q + a - 1 } \left( \frac{ bk-t }{ a } \right)^2 
    - 2 \sum_{ k = \q }^{ \q + a - 1 } \frac{ bk-t }{ a } \fr{ \frac{ bk-t }{ a } }
    + \sum_{ k = \q }^{ \q + a - 1 } \fr{ \frac{ bk-t }{ a } }^2 \\
  &\qquad = \frac 1 3 a b^2 + b^2 \q - \frac 1 2 b^2 - b t + \frac 1 a \left( b^2 \q^2 - b^2  \q  - 2 b  \q  t +
\frac 1 6 b^2 + b t + t^2 \right) \\
  &\qquad \qquad - \frac{ 2b }{ a } \sum_{ k = \q }^{ \q + a - 1 } k \fr{ \frac{ bk-t }{ a } }
    + \frac{ 2t }{ a } \sum_{ k = \q }^{ \q + a - 1 } \fr{ \frac{ bk-t }{ a } }
    + \sum_{ k = 0 }^{ a - 1 } \fr{ \frac{ k + \fr{ -t } }{ a } }^2 \\
  &\qquad = \frac 1 3 a b^2 + b^2  \q  - \frac 1 2 b^2 - b t + \frac 1 a \left( b^2  \q^2 - b^2  \q  - 2 b  \q  t +
\frac 1 6 b^2 + b t + t^2 \right) \\
  &\qquad \qquad - \frac{ 2b }{ a } \sum_{ k = 0 }^{ a - 1 } (k + \q) \fr{ \frac{ b(k + \q)-t }{ a } }
    + \frac{ 2t }{ a } \sum_{ k = 0 }^{ a - 1 } \fr{ \frac{ k-t }{ a } }
    + \sum_{ k = 0 }^{ a - 1 } \left( \frac{ k + \fr{ -t } }{ a } \right)^2 \\
  &\qquad = \frac 1 3 a b^2 - \frac 1 2 a b + \frac 1 3 a + b^2 \q - b t - \frac 1 2 b^2 + \frac 1 2 b
- \frac 1 2 - \fl{ -t } -b \q\\
   &\qquad \qquad + \frac 1 a \left( b^2 \q^2 - b^2 \q + 2 b \q \fl{-t} + \frac 1 6 b^2 + b t + \frac
1 6 + \fl{ -t }^2 + \fl{ -t } + b\q \right) \\
   &\qquad \qquad - 2b \, \r_{ b \q - t } (b,a) - \frac b a [-a t]_a + \frac b a [b\q - t]_a - \chi
\, b \saw{ \frac{ q - t b^{ -1 } }{ a } } ,
\end{align*}
where $b \, b^{ -1 } \equiv 1 \bmod a$.
(Note that $b\q-t \in \Z$ if and only if $t \in \Z$.)

We are all set to substitute the expressions we found back into \eqref{eq:foursums}.
Simplifying terms such as $\fr t + \fr{-t}$ (which equals 1 if $t \notin \Z$ and 0 if $t \in \Z$) and
$\frac{ [x]_a }{ a } = \fr{ \frac x a }$ gives
\begin{align*}	
  &\r_{ a \p + t }(a,b) + \r_{ b \q - t } (b,a) \, = \ \\
&\quad \frac{a \p^2}{2 b}-\frac{a \p}{2 b}+\frac{b \q^2}{2 a}-\frac{b \q}{2 a}+\frac{b}{12 a}+\frac{a}{12 b}+\frac{1}{12 a b}+\frac{\q \lfloor
   -t\rfloor }{a}+\frac{\q}{2 a}+\frac{\p \lfloor t\rfloor }{b}+\frac{\p}{2 b}+\frac{t}{2 a}-\frac{t}{2 b}\\
&\quad-\p \q+\frac{\p}{2}+\frac{\q}{2}-\frac{3}{4}+\frac{\lfloor -t\rfloor ^2}{2 a b}+\frac{\lfloor
   -t\rfloor }{2 a b}+\frac{1}{2}\fr{\frac{a \p+t}{ b}}+\frac 1 2 \fr{\frac{b \q-t }{ a}}\\
&\quad +\chi\left( -\frac{1}{2}   \saw{\frac{a^{-1} (a \p+t)}{b}}-\frac{1}{2}   \saw{\frac{b^{-1}(b \q-t)}{a}}+\frac{1 }{2}-\frac{c  }{b}\right)
\end{align*}
Now we use the relation $bq = ap + t$, which simplifies the left-hand side to
\[
  \r_{ a \fr{-p} - b\fr{-q} }(a,b) + \r_{ b \fr{-q}- a\fr{-p} } (b,a) \, .
\]
But this means we might as well choose $p$ and $q$ in some interval of length 1; it is easiest to assume $-1 < p, q \le 0$, since this will simplify the right-hand
side most easily:
\begin{align*}
&\r_{ bq-ap }(a,b) + \r_{ ap-bq } (b,a) \, = \ \\
&\qquad \frac{a}{12 b}+\frac{b}{12 a}+\frac{1}{12 a b}-\frac{3}{4}+\frac{\lfloor ap-bq\rfloor ^2}{2 a b}+\frac{\lfloor ap-bq\rfloor }{2 a b}-\frac 1 2 \fl{\frac{ap-bq} {a}}-\frac 1 2 \fl{\frac {bq-ap} b}\\
&\qquad +\chi\left(\frac{1}{2}-\frac{c }{b}-\frac{1}{2}   \saw{\frac{a^{-1} (bq-ap)}{b}}-\frac{1}{2} \saw{\frac{b^{-1}(ap - bq)}{a}} \right) .
\end{align*}
Recall that $c$ is the unique integer satisfying 
\[
  c \equiv a^{ -1 } (ap-bq) \pmod b
  \qquad \text{ and } \qquad
  p \le c < p+b \, ,
\]
Since $-1 < p \le 0$, this condition simply says that $c$ is the smallest nonnegative integer congruent to $a^{ -1 } (ap-bq)$ modulo $b$, that is,
\[
  c = b \fr{ \frac{ a^{ -1 } (ap-bq) }{ b }  } = - b \saw{ \frac{ a^{ -1 } (bq-ap) }{ b }  } + (1-\mu) \, \frac b 2 \, ,
\]
where $\mu = 1$ if $b|bq-ap$ and $\mu = 0$ otherwise. This yields
\begin{align*}
&\r_{ bq-ap }(a,b) + \r_{ ap-bq } (b,a) \, = \ \\
&\qquad \frac{a}{12 b}+\frac{b}{12 a}+\frac{1}{12 a b}-\frac{3}{4}+\frac{\lfloor ap-bq\rfloor ^2}{2 a b}+\frac{\lfloor ap-bq\rfloor }{2 a b}-\frac 1 2 \fl{\frac{ap-bq} {a}}-\frac 1 2 \fl{\frac {bq-ap} b}\\
&\qquad +\chi\left(\frac{ \mu }{2}+\frac{1}{2} \saw{\frac{a^{-1} (bq-ap)}{b}}-\frac{1}{2} \saw{\frac{b^{-1}(ap - bq)}{a}} \right) .
\end{align*}
Now we set $q=0$ and assume that $a < b$, for which the above identity simplifies to
\begin{align*}
\r_{ bq }(a,b) + \r_{ -bq } (b,a) \, = \ 
& \frac{a}{12 b}+\frac{b}{12 a}+\frac{1}{12 a b}-\frac{3}{4}+\frac{\lfloor -bq\rfloor ^2}{2 a b}+\frac{\lfloor -bq\rfloor }{2 a b}-\frac 1 2 \fl{\frac{-bq} {a}}-\frac
1 2 \fl{q}\\
&\qquad +\chi\left(\frac{ \mu }{2}-\frac{1}{2} \saw{\frac{a^{-1} (-bq)}{b}}-\frac{1}{2} \saw{\frac{b^{-1}(-bq)}{a}} \right) ,
\end{align*}
where $\chi$ equals $1$ or $0$ depending on whether or not $bq$ is an integer,
$\mu$ equals $1$ or $0$ depending whether or not $q=0$, 
$a \, a^{ -1 } \equiv 1 \bmod b$, and $b \, b^{ -1 } \equiv 1 \bmod a$.
Noticing that $\fl q = -1$ unless $q=0$, and setting $t = -bq$ (which is a real number in the interval $[0,b)$) yields Theorem~\ref{thm:newrad}.
\end{proof}

We strongly suspect that there exists a more direct proof of Theorem \ref{thm:newrad}. We leave it as a challenge to the reader to find one.

\rem{  &\frac{1}{ab}\left( \frac{ 1 }{ 12 } + \frac 1 2 \fl{ ap-bq } + \frac 1 2 \fl{ ap-bq }^2 \right) -
\p \q + \frac 1 2 \fr{-p} + \frac 1 2 \fr{-q} - \frac 3 4 \\
  &+ \frac{1}{a} \left( \frac{1}{12} b + \frac 1 2 b\q^2 + \frac 1 2 \q + \q \fl{ ap-bq } \right) -
\frac 1 2 \fl{ \frac{ b \fr{-q} }{ a } } \\ 
&+ \frac{1}{b} \left( \frac{1}{12} a + \frac 1 2 a\p^2 + \frac 1 2 \p + \p \fl{ bq-ap } \right) -
\frac 1 2 \fl{ \frac{ a \fr{-p} }{ b } } \\
  &+ \chi \left( \frac 1 2 - \frac c b - \frac 1 2 \saw{\frac{ \fr{-p} }{b} + q a^{ -1 } } - \frac 1 2
\saw{\frac{ \fr{-q} }{a} + p b^{ -1 } } \right)
																					. \qedhere}

\bibliographystyle{amsplain}
\bibliography{bib}

\def\cprime{$'$} \def\cprime{$'$}
\providecommand{\bysame}{\leavevmode\hbox to3em{\hrulefill}\thinspace}
\providecommand{\MR}{\relax\ifhmode\unskip\space\fi MR }
\providecommand{\MRhref}[2]{%
  \href{http://www.ams.org/mathscinet-getitem?mr=#1}{#2}
}
\providecommand{\href}[2]{#2}
\begin{thebibliography}{10}

\bibitem{almkvist}
Gert Almkvist, \emph{Asymptotic formulas and generalized {D}edekind sums},
  Experiment. Math. \textbf{7} (1998), no.~4, 343--359.

\bibitem{barvinokalgorithm}
Alexander~I. Barvinok, \emph{A polynomial time algorithm for counting integral
  points in polyhedra when the dimension is fixed}, Math. Oper. Res.
  \textbf{19} (1994), no.~4, 769--779.

\bibitem{bayadsimsek}
Abdelmejid Bayad and Yilmaz Simsek, \emph{Dedekind sums involving {J}acobi
  modular forms and special values of {B}arnes zeta functions}, Ann. Inst.
  Fourier (Grenoble) \textbf{61} (2011), no.~5, 1977--1993 (2012).

\bibitem{bhm}
Matthias Beck, Christian Haase, and Asia~R. Matthews, \emph{Dedekind-{C}arlitz
  polynomials as lattice-point enumerators in rational polyhedra}, Math. Ann.
  \textbf{341} (2008), no.~4, 945--961, {\tt arXiv:0710.1323}.

\bibitem{polygons}
Matthias Beck and Sinai Robins, \emph{Explicit and efficient formulas for the
  lattice point count in rational polygons using {D}edekind--{R}ademacher
  sums}, Discrete Comput. Geom. \textbf{27} (2002), no.~4, 443--459, {\tt
  arXiv:math.CO/0111329}.

\bibitem{ccd}
\bysame, \emph{Computing the {C}ontinuous {D}iscretely: Integer-point
  {E}numeration in {P}olyhedra}, Undergraduate Texts in Mathematics, Springer,
  New York, 2007, Electronically available at {\tt
  http://math.sfsu.edu/beck/ccd.html}.

\bibitem{brion}
Michel Brion, \emph{Points entiers dans les poly\`edres convexes}, Ann. Sci.
  \'Ecole Norm. Sup. (4) \textbf{21} (1988), no.~4, 653--663.

\bibitem{carlitzpolynomials}
Leonard Carlitz, \emph{Some polynomials associated with {D}edekind sums}, Acta
  Math. Acad. Sci. Hungar. \textbf{26} (1975), no.~3-4, 311--319.

\bibitem{charollois}
Pierre Charollois, \emph{Sommes de {D}edekind associ\'ees \`a un corps de
  nombres totalement r\'eel}, J. Reine Angew. Math. \textbf{610} (2007),
  125--147, {\tt arXiv:math/0405262}.

\bibitem{dedekind}
Richard Dedekind, \emph{Erl\"auterungen zu den {F}ragmenten xxviii}, Collected
  Works of Bernhard Riemann, Dover Publ., New York, 1953, pp.~466--478.

\bibitem{ehrhartpolynomial}
Eug{\`e}ne Ehrhart, \emph{Sur les poly\`edres rationnels homoth\'etiques \`a
  {$n$}\ dimensions}, C. R. Acad. Sci. Paris \textbf{254} (1962), 616--618.

\bibitem{gunnelsczech}
Paul~E. Gunnells and Robert Sczech, \emph{Evaluation of {D}edekind sums,
  {E}isenstein cocycles, and special values of {$L$}-functions}, Duke Math. J.
  \textbf{118} (2003), no.~2, 229--260.

\bibitem{hirzebruchzagier}
Friedrich Hirzebruch and Don Zagier, \emph{The {A}tiyah-{S}inger {T}heorem and
  {E}lementary {N}umber {T}heory}, Publish or Perish Inc., Boston, Mass., 1974.

\bibitem{knuth}
Donald~E. Knuth, \emph{The {A}rt of {C}omputer {P}rogramming. {V}ol. 2}, second
  ed., Addison-Wesley Publishing Co., Reading, Mass., 1981.

\bibitem{meyersczech}
Werner Meyer and Robert Sczech, \emph{\"{U}ber eine topologische und
  zahlentheoretische {A}nwendung von {H}irzebruchs {S}pitzenaufl\"osung}, Math.
  Ann. \textbf{240} (1979), no.~1, 69--96.

\bibitem{mordell}
Louis~J. Mordell, \emph{Lattice points in a tetrahedron and generalized
  {D}edekind sums}, J. Indian Math. Soc. (N.S.) \textbf{15} (1951), 41--46.

\bibitem{ota}
Kaori Ota, \emph{Dedekind sums with characters and class numbers of imaginary
  quadratic fields}, Acta Arith. \textbf{108} (2003), no.~3, 203--215.

\bibitem{pick}
Georg~Alexander Pick, \emph{Geometrisches zur {Z}ahlenlehre}, Sitzenber.~Lotos
  (Prague) \textbf{19} (1899), 311--319.

\bibitem{pommersheim}
James~E. Pommersheim, \emph{Toric varieties, lattice points and {D}edekind
  sums}, Math. Ann. \textbf{295} (1993), no.~1, 1--24.

\bibitem{rademacherdedekind}
Hans Rademacher, \emph{Some remarks on certain generalized {D}edekind sums},
  Acta Arith. \textbf{9} (1964), 97--105.

\bibitem{grosswald}
Hans Rademacher and Emil Grosswald, \emph{Dedekind {S}ums}, The Mathematical
  Association of America, Washington, D.C., 1972.

\bibitem{solomondedsum}
David Solomon, \emph{Algebraic properties of {S}hintani's generating functions:
  {D}edekind sums and cocycles on {${\rm PGL}\sb 2({\bf Q})$}}, Compositio
  Math. \textbf{112} (1998), no.~3, 333--362.

\end{thebibliography}

\rem{A look on the left-hand side of the equation reveals that only the fractional parts of $p$ and $q$ contribute to the Dedekind--Rademacher sum. In other words, we can without loss of generality assume that $p$, $q \in [0,1)$. As a consequence, the right-hand side simplifies further, which leads to the following representation.
\begin{theorem}\label{thm:newrad2}
Let $a$ and $b$ be relatively prime positive integers, and let $p, q \in \R$. Then
\begin{align*}
  &\r_{ a \fr{p} }(a,b) + \r_{ b \fr{q} } (b,a) = \\
  &\frac{1}{ab}\left( \frac{ 1 }{ 12 } + \frac 1 2 \fl{ b \fr{q}-a\fr{p }} + \frac 1 2 \fl{ b\fr{q}-a\fr{p} }^2 \right)  - \frac 3 4 + \frac{1}{a} \left( \frac{1}{12} b +\frac 1 2 (a \fr{p} - b \fr{q})\right) \\
  &+ \frac{1}{b} \left( \frac 1 2 + \frac{1}{12} a + \frac 1 2 \fl {b \fr{q} - a \fr{p}}- \frac 1 2 \fr { a \fr{p} - b \fr{q}}  \right)+ \frac 1 2 \fr{ \frac{ a \fr{p} }{ b } } + \frac 1 2 \fr{ \frac{ b \fr{q} }{ a } }\\
  &+ \chi \left( \frac 1 2 - \frac 1 2b - \frac c b - \frac 1 2 \saw{\frac{ \fr{p} }{b} + \fr{q} a^{ -1 } } - \frac 1 2
\saw{\frac{ \fr{q} }{a} + \fr{p} b^{ -1 } } \right) . \qedhere
\end{align*}
where $\chi$ equals $1$ or $0$ depending on whether or not $bq-ap$ is an integer, $a \, a^{ -1 }
\equiv 1 \bmod b$, $b \, b^{ -1 } \equiv 1 \bmod a$, and $c$ is the first coordinate of the smallest
integer solution $(c,d)$ to the equation $by-ax=b\fr q-a \fr p$.
\end{theorem}}
\end{document}